\setlist[enumerate]{itemsep=0mm}
\newtheorem*{theorem*}{Theorem}
\newtheorem{theorem}{Theorem}[section]
\newtheorem{lemma}[theorem]{Lemma}
\theoremstyle{definition}
\newtheorem{definition}[theorem]{Definition}
\newtheorem{remark}[theorem]{Remark}
\begin{document}
\title{The Largest Pure Partial Planes of Order 6 Have Size 25}
\author{Yibo Gao}
\date{}
\maketitle
\begin{abstract}
In this paper, we prove that the largest pure partial plane of order 6 has size 25. At the same time, we classify all pure partial planes of order 6 and size 25 up to isomorphism. Our major approach is computer search. The search space is very large so we use combinatorial arguments to rule out some of the cases. For the remaining cases, we subdivide each search by phases and use multiple checks to reduce search space via symmetry.
\end{abstract}
\section{Introduction}
The problem of the existence of finite projective planes of certain orders has been attracting mathematicians' interest for hundreds of years. However, the problem still remains widely open. People know that finite projective planes of order equal to prime powers exist and no finite projective planes of order not equal to a prime power have been found. Therefore, some mathematicians conjectured that finite projective planes can only have prime power orders.

There was some progress made in the past. In 1938, Bose \cite{bose1938application} proved that there is no projective plane of order 6 by relating the existence of a finite projective plane to the existence of a \textit{hyper-Graeco-Latin square}, which is known as \textit{orthogonal Latin squares} in modern terminology. In 1949, Bruck and Ryser \cite{bruck1949nonexistence} proved that if the order $n$ is congruent to 1 or 2 modulo 4 and $n$ cannot be represented as the sum of two perfect squares, then there does not exist any finite projective planes of order $n$. This result is known as Bruck-Ryser theorem. By this famous combinatorial theorem, infinitely many cases are solved but still infinitely many cases are left.

Due to Bose's result and the Bruck-Ryser theorem, the smallest unsolved case is order $n=10$. After some progress using binary codes \cite{assmus1970possibility}, Lam, Thiel and Swierczp \cite{lam1989non} proved the nonexistence of finite projective planes of order 10 with the help of super computers and a total of 2 to 3 years of running time.

Aside from finding more finite projective planes or proving their nonexistence, there is one more attractive question. We already know that there is no finite projective planes of order 6, but how close can we come to constructing such a plane? In particular, what is the largest pure partial plane (see Definition \ref{Def:ppp}) of order 6 we can construct? In \cite{hering2007partial}, a pure partial plane of order 6 related to icosahedron is constructed and in \cite{prince2009pure}, two pure partial planes of order 6 with 25 lines that extend the dual of the point-line incidence structure of $PG(3,2)$, the three-dimensional projective geometry, are constructed. In \cite{mccarthy1976approximations}, McCarthy et al. proved that there are no pure partial plane of order 6 and size 29 with very long combinatorial arguments. However, the exact maximum has not been given.

In this paper, we prove that the maximum size of pure partial planes of order 6 is 25. Or in other words, a pure partial plane of order 6 contains at most 25 lines. 

\begin{theorem}\label{thm:main}
The maximum size of a pure partial plane of order 6 is 25. Furthermore, all pure partial planes of size 25 are listed in Appendix \ref{Sec:allPPP}.
\end{theorem}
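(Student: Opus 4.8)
The statement has two halves, and I would treat them separately. The existence half---that size $25$ is attainable and the list in Appendix~\ref{Sec:allPPP} is complete---rests on the constructions already recorded in \cite{prince2009pure}; verifying that each listed configuration is a genuine pure partial plane of order $6$ is a finite, mechanical check, so the real work is the upper bound: \emph{no} pure partial plane of order $6$ has $26$ or more lines. My plan is to reduce this non-existence statement to a finite backtracking search that a computer can certify, using combinatorial arguments to keep the number and size of the cases under control.

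First I would fix notation and extract the hard constraints from Definition~\ref{Def:ppp}: every line carries exactly $7$ points, two lines meet in at most one point, and purity forbids degenerate (degree-one) points, so every point lies on at least $2$ lines. Writing $b$ for the number of lines and $d_P$ for the number of lines through a point $P$, double counting gives $\sum_P d_P = 7b$ and $\sum_P \binom{d_P}{2} \le \binom{b}{2}$, since each pair of lines is concurrent at most once. A short argument---take any line missing a given point $P$ and note that the lines through $P$ cut it in distinct points---shows $d_P \le 7$ for all $P$. Feeding $b \ge 26$ into these inequalities constrains the degree sequence sharply and, in particular, forces a point of large degree; I would then split into cases according to the maximum degree $r$ occurring in the plane.

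In each case I would seed the search with a canonical configuration: fix a point $P$ of degree $r$ together with its pencil of $r$ lines, which may be chosen up to isomorphism, and then grow the plane by a backtracking augmentation that adds admissible lines one at a time while maintaining the intersection and degree constraints. To keep the tree from exploding I would run the search in phases---first completing the local structure around the high-degree points, then filling in the remaining lines---and interleave canonical-form (isomorph-rejection) tests between phases so that branches equivalent under the stabilizer of the current partial configuration are explored only once. For the target $b = 26$ the search should terminate with no completions, proving the bound; rerunning at $b = 25$ enumerates every solution, and a final reduction modulo isomorphism yields exactly the list in Appendix~\ref{Sec:allPPP}.

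The main obstacle is the sheer size of the search, not any single clever step. Even after fixing a canonical seed and bounding the degrees, the number of ways to add the remaining lines is astronomical, so the whole argument lives or dies on the quality of the symmetry reductions. The delicate point is correctness: the phased isomorph rejection must be set up so that it never discards a branch that could lead to a genuinely new plane, for otherwise both the non-existence proof and the completeness of the classification would fail. Designing these checks so the search is simultaneously tractable and provably exhaustive---and organizing the case split so that the combinatorial bounds actually eliminate the regimes a direct search cannot reach---is where I expect essentially all of the effort to go.
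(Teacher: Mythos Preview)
Your high-level plan---case split on the maximum point degree, seed a canonical pencil, backtrack with isomorph rejection---is indeed the shape of the paper's argument, but several of your preliminary claims are wrong and, more importantly, you are missing the combinatorial lemmas that make the search finite in practice.

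First, two errors in the setup. ``Pure'' in Definition~\ref{Def:ppp} does \emph{not} forbid degree-$1$ points; it only says that lines have exactly $n+1$ points and that any two lines meet in exactly one point. Points of degree $0$ and $1$ occur freely, and the paper's analysis tracks $a_0$ and $a_1$ throughout. Relatedly, because two lines meet in \emph{exactly} one point (not at most one), your inequality $\sum_P \binom{d_P}{2} \le \binom{b}{2}$ is in fact an equality; this is Lemma~\ref{Lem:SumA}, and the equality is what gives the system enough rigidity to solve for the degree sequence.

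Second, and this is the real gap, you are missing the two structural lemmas that the paper uses to avoid an intractable search. Lemma~\ref{Lem:NoN} says that in a \emph{saturated} pure partial plane of order $n$ no point has degree exactly $n$; for $n=6$ this kills $a_6$ outright and is the reason the paper passes to saturated planes. Lemma~\ref{Lem:SumC} says that on any single line the point degrees sum to $s+n$; combined with $a_6=0$ and the search result for ``five points of degree $5$ on a common line,'' this forces every line in a size-$25$ plane with $a_7=0$ to have degree pattern $5,5,5,4,4,4,4$, whence $a_5=15$, $a_4=25$, and everything else vanishes---a single rigid case rather than an open-ended search. The same two lemmas produce the contradiction $a_4<0$ when $a_7=1$, $s\ge 25$. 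Without these per-line and saturation constraints, your ``run at $b=26$, rerun at $b=25$'' plan has no mechanism to handle the regime where the maximum degree is $5$: the canonical seed is then only five lines, and a raw backtrack from there is hopeless. The paper does not search that regime; it eliminates it arithmetically and searches only the residual degree sequence.

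Finally, the existence and classification half does not follow from \cite{prince2009pure}: that paper gives two size-$25$ examples, whereas the complete list here has four, and completeness is a byproduct of the exhaustive search, not of the prior literature.
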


To do this, we use computer search combined with standard combinatorial arguments. In Section \ref{Sec:Prelim}, we define the notion of saturated pure partial planes and introduce some related and useful lemmas. In Section \ref{Sec:Alg}, we give our algorithmic model for computer search and in Section \ref{Sec:Result}, we present our results. We also attach our code for readers to verify. A list of codes that we provide is shown in Appendix \ref{Sec:files}. Finally, the proof of our main theorem comes in Section \ref{Sec:Main}. All the pure partial planes of order 6 and size 25 are provided in Appendix \ref{Sec:allPPP}.
\section{Preliminaries}\label{Sec:Prelim}
In this paper, a ``point" means an element in a universe and a ``line" means a subset of this universe, or equivalently, a set of ``points". We will consider ``points" and ``lines" only in a set-theoretic view and won't discuss any finite geometry here.
\begin{definition}
A \textit{finite projective plane (FPP) of order} $n$, or a \textit{projective plane of order} $n$, is a collection of $n^2+n+1$ points and $n^2+n+1$ lines, such that
\begin{enumerate}
\item[(1)] every line contains $n+1$ points;
\item[(2)] every point is on $n+1$ lines;
\item[(3)] every two distinct lines intersect at exactly one point;
\item[(4)] every any two distinct points lies on exactly one line.
\end{enumerate}
\end{definition}

\begin{definition}\label{Def:ppp}
A \textit{pure partial plane (PPP) of order} $n$ \textit{and size} $s$ is a collection of $n^2+n+1$ points and $s$ lines, such that
\begin{enumerate}
\item[(1)] every line contains $n+1$ points;
\item[(2)] every two distinct lines intersect at exactly one point;
\end{enumerate}
\end{definition}
In Definition \ref{Def:ppp}, we say that there are $n^2+n+1$ points. Just for clarity, there can be at most $n^2+n+1$ points since some points may not appear in any of the lines.
\begin{definition}
We say that a pure partial plane is \textit{saturated} if no lines can be added to it such that it still remains a pure partial plane. We use the abbreviation SPPP for \textit{saturated pure partial plane}.
\end{definition}
\begin{definition}
Two pure partial planes are isomorphic if there exists a bijection of their points and a bijection of their lines such that the point-in-line relation is equivalent under these two bijections.
\end{definition}
We are only interested in (saturated) pure partial planes up to isomorphism. And for the rest of the paper, we will consider two (saturated) pure partial planes to be the same if they are isomorphic. In other words, we only care about isomorphism classes.

For convenience, we make the following definition.
\begin{definition}
We say that two lines are \textit{compatible} if they intersect at exactly one point and that two sets of lines are \textit{compatible} if every line from one set is compatible with every line from the other set.
\end{definition}

It is immediate that a finite projective plane is always a saturated pure partial plane of the same order, and is a largest one, in terms of the size.

From now on, we will always use $n$ for the order and $s$ for the size.

For convenience, we label all points as $0,1,\ldots,n^2+n$ and represent straightforwardly a line as a set of cardinality $n+1$, e.g., $\{0,1,2,3,4,5,6\}$.

\begin{lemma}\label{Lem:NoN}
For a saturated pure partial plane of order $n$, no points appear in exactly $n$ lines.
\end{lemma}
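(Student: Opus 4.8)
The plan is to argue by contradiction, exploiting the saturation hypothesis directly: I will assume some point $p$ lies on exactly $n$ lines and then construct a brand-new line through $p$ that is compatible with every existing line, which contradicts the plane being saturated. The whole proof is a short counting argument, so the work is in setting up the bookkeeping cleanly rather than in any hard estimate.

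First I would name the $n$ lines through $p$ as $L_1,\dots,L_n$. Since any two distinct lines meet in exactly one point and all of these pass through $p$, the sets $L_i\setminus\{p\}$ must be pairwise disjoint: a common point other than $p$ would give $L_i$ and $L_j$ a second intersection. Each such set has size $n$, so the lines through $p$ cover exactly $1+n^2$ points, leaving exactly $(n^2+n+1)-(n^2+1)=n$ points untouched. Call this set of uncovered points $Q$, so that $|Q|=n$.

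Next I would propose the candidate line $L:=\{p\}\cup Q$, which has the correct cardinality $n+1$, and check its compatibility with every existing line. For the lines $L_i$ through $p$ this is immediate: $Q$ is disjoint from every $L_i\setminus\{p\}$, so $L\cap L_i=\{p\}$, a single point. The only real content is a line $M$ not passing through $p$. Here I would run the counting step: each $L_i$ meets $M$ in exactly one point, and that point is not $p$ since $p\notin M$; moreover these $n$ intersection points are pairwise distinct, because a coincidence $L_i\cap M=L_j\cap M$ would again force a second common point of $L_i$ and $L_j$. Thus $M$ contains exactly $n$ points lying on the $L_i$'s, and since $|M|=n+1$, its one remaining point must lie in $Q$. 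Hence $|L\cap M|=|Q\cap M|=1$.

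The main obstacle, and really the only subtle step, is this last verification that $L$ meets every off-$p$ line in exactly one point; everything hinges on showing $M$ has precisely one point in $Q$, which follows from the disjointness and distinctness bookkeeping above. Once compatibility is established for all lines, $L$ is a legitimate line that can be adjoined to the plane, contradicting saturation. It is also worth noting why the statement is sharp at exactly $n$: if $p$ lay on fewer than $n$ lines there would be strictly more than $n$ uncovered points, and an off-$p$ line could then miss $Q$ or meet it twice, so the construction would break down. Therefore no point of a saturated pure partial plane can lie on exactly $n$ lines.
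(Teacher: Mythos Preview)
Your proof is correct and follows essentially the same approach as the paper: both argue by contradiction, take the $n$ lines through the putative point $p$, observe they miss exactly $n$ points, and show that adjoining $\{p\}$ together with those $n$ missed points yields a new line compatible with every existing line, contradicting saturation. The only cosmetic difference is that the paper fixes explicit labels (taking $p=0$ and the lines through it to be $\{0,kn+1,\dots,kn+n\}$), whereas you phrase the same counting argument abstractly.
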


\begin{proof}
We use proof by induction. Suppose that there is a SPPP with point 0 appearing in $n$ lines. Assume that these $n$ lines are $\{0,1,\ldots,n\},\ \{0,n+1,\ldots,2n\},\ \ldots,\ \{0,n^2-n+1,\ldots,n^2\}$. For any other line $L$, it does not contain point $0$, so by definition, it must intersect $\{in+1,in+2,\ldots,in+n\}$ at exactly one point, for $i=0,1,\ldots,n-1$. Since $L$ contains $n+1$ points in total, we know that $L$ must intersect with $\{n^2+1,n^2+n,\ldots,n^2+n\}$ at exactly one point, too. Line $L$ does not contain 0 so $L$ intersects with $\{0,n^2+1,\ldots,n^2+n\}$ at exactly one point. It is then obvious that we can add a new line $\{0,n^2+1,\ldots,n^2+n\}$ to this collection, contradicting the property of saturation.
\end{proof}

\begin{lemma}\label{Lem:SumA}
For a pure partial plane of order $n$ and size $s$, suppose that there are $a_k$ points that appear in $k$ lines, for $k=0,1,\dots$. Then
$$\sum_{k}ka_k=(n+1)s;\qquad\sum_{k}k^2a_k=s^2+ns.$$
\end{lemma}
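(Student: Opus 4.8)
The plan is to establish both identities by double counting, using first the number of point--line incidences and then the number of ordered pairs of lines meeting at a common point. Both sums range over all points of the plane, grouped according to how many lines contain them, so each identity amounts to counting the same quantity in two different ways and equating the results.

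For the first identity I would count the set of incident pairs $(P,L)$, where $P$ is a point lying on the line $L$. Counting by lines: by property (1) of Definition \ref{Def:ppp} each of the $s$ lines contains exactly $n+1$ points, so there are $(n+1)s$ such pairs. Counting by points: a point lying on exactly $k$ lines contributes $k$ to the count, and there are $a_k$ such points, so the total is $\sum_k k\,a_k$. Equating the two expressions gives $\sum_k k\,a_k = (n+1)s$.

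For the second identity I would count ordered triples $(P,L_1,L_2)$ in which $P$ lies on both $L_1$ and $L_2$, where the two lines are allowed to coincide. Counting by points: a point on exactly $k$ lines lies on $k^2$ ordered pairs $(L_1,L_2)$, so summing over all points yields $\sum_k k^2 a_k$. Counting by pairs of lines: when $L_1=L_2$ every one of the $n+1$ points of that line qualifies, contributing $(n+1)s$ over the $s$ diagonal pairs; when $L_1\neq L_2$, property (2) guarantees exactly one common point, and there are $s(s-1)$ such ordered pairs, contributing $s(s-1)$. Summing gives $(n+1)s + s(s-1) = s^2 + ns$, and equating with the point count completes the proof.

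The only real subtlety, and the step I would be most careful about, is the bookkeeping in the second count: one must use \emph{ordered} pairs of lines so that a point on $k$ lines is correctly weighted by $k^2$ rather than $\binom{k}{2}$, and one must separate the diagonal contribution $L_1=L_2$ (where property (2) does not apply) from the off-diagonal contribution. Once the triples are set up in this ordered fashion, the arithmetic is immediate and both identities follow at once.
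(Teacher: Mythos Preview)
Your proof is correct and is essentially the same argument as the paper's. The paper phrases it via incidence vectors $L_i\in\{0,1\}^{n^2+n+1}$ and computes the $L^1$ norm and the dot product of $v=L_1+\cdots+L_s$, but unpacking those computations yields exactly your double count of incident pairs $(P,L)$ and of ordered triples $(P,L_1,L_2)$.
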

\begin{proof}
For each line $i$ in this pure partial plane, with $i=1,\ldots,s$, associate a vector $L_i\in\{0,1\}^{n^2+n+1}$ with it, such that $L_{i,j}=1$ if point $j$ appears in line $i$ and equals 0 otherwise. Let $v=L_1+\cdots+L_s$. The entries of $v$ are just a permutation of $m_0,m_1,\ldots,m_{n^2+n}$ so the $L^1$ norm of $v$ is $\sum ka_k$ and at the same time, it is the sum of the $L^1$ norms of $L_i$'s, giving us $(n+1)s$.

At the same time, $v\cdot v=\sum k^2a_k$. By definition, $L_i\cdot L_j=1$ if $i\neq j$ and $L_i\cdot L_j=n+1$ if $i=j$ so $v\cdot v=(n+1)s+(s^2-s)=s^2+ns$, as desired.
\end{proof}
Lemma \ref{Lem:SumA} is a simple but useful lemma that has appeared in other forms in previous works. For example, \cite{mccarthy1976approximations} mentions essentially the same thing in Section 3 but in a different format.

\begin{lemma}\label{Lem:SumC}
Suppose that $\{i_1,i_2,\ldots,i_{n+1}\}$ is a line in a pure partial plane of order $n$ and size $s$, and suppose that point $i_k$ appears $c_{i_k}$ times. Then $$c_{i_1}+c_{i_2}+\cdots+c_{i_{n+1}}=s+n.$$
\end{lemma}
\begin{proof}
All the lines in this pure partial plane are either the line $\{i_1,\ldots,i_{n+1}\}$ or contain exactly one of $i_1,\ldots,i_{n+1}$. Since $i_k$ appears $c_{i_k}$ times, there are $c_{i_k}-1$ lines that contain $i_k$ but not $i_j$ for all $j\neq k$ with $1\leq j\leq n+1$. Therefore, we have $(c_{i_1}-1)+(c_{i_2}-1)+\cdots+(c_{i_{n+1}}-1)+1=s$ and thus
$$c_{i_1}+c_{i_2}+\cdots+c_{i_{n+1}}=s+n.$$
\end{proof}

\begin{theorem}\label{Thm:even}
For any saturated pure partial plane of even order, there exists a point that appear in at least 3 lines.
\end{theorem}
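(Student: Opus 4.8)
The plan is to argue by contradiction: assume a saturated pure partial plane of even order $n$ in which every point lies on at most two lines, and then exhibit a line that can be legally added, contradicting saturation.

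First I would record the structure forced by this degree bound. Since the plane is saturated it contains at least one line (the empty collection can always be extended by a fresh line). Fix any line $L=\{i_1,\dots,i_{n+1}\}$; by Lemma \ref{Lem:SumC} the incidence sum $c_{i_1}+\cdots+c_{i_{n+1}}=s+n$, and since each $c_{i_k}\in\{1,2\}$ this pins $s$ into the range $1\le s\le n+2$. More importantly, for an arbitrary line $L_i$ the $s-1$ points where it meets the other lines are pairwise distinct, since a common point of three lines would have degree $3$; hence $L_i$ carries exactly $s-1$ points of degree $2$ and exactly $n+2-s$ ``private'' points of degree $1$ lying on no other line. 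Equivalently, the degree-$2$ points realize each pair of lines exactly once.

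Next I would build the extra line $M$. If $s\le n+1$, every line still has a private (degree-$1$) point, so I take one private point from each of the $s$ lines---these are automatically distinct, as each lies on a single line---and pad with $n+1-s$ hitherto-unused points to reach size $n+1$. Such an $M$ meets every existing line in exactly one point, so it can be added. The delicate case is $s=n+2$, where no private points remain and $M$ must meet every line at one of its degree-$2$ points; this forces the chosen intersection points to cover each line once, i.e.\ to correspond to a perfect matching on the $s$ lines. This is precisely where evenness enters: $n$ even forces $s=n+2$ even, so a perfect matching exists, and its $s/2$ intersection points (padded by $n/2$ unused points) again form an addable line.

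The main obstacle is exactly this boundary case $s=n+2$: one must see that a new line is then forced to consist entirely of old intersection points arranged as a perfect matching, and must check feasibility---that enough unused points remain for the padding, which a short comparison of $a_1+a_2$ against $n^2+n+1$ (using the values $a_1=s(n+2-s)$, $a_2=\binom{s}{2}$ from Lemma \ref{Lem:SumA}) confirms. In every case the construction yields a line compatible with all existing ones, contradicting saturation; hence some point must lie on at least three lines.
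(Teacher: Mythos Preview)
Your argument is correct and follows the same contradiction strategy as the paper---assume every point has degree at most $2$, bound $s\le n+2$, and then build an additional compatible line---but your case split and your construction differ in a useful way. The paper splits on the parity of $s$: for $s$ even it takes a perfect matching on the lines and uses the $s/2$ intersection points (padded with fresh points), while for $s$ odd it uses one degree-$1$ point together with a matching on the remaining $s-1$ lines; it also treats $n=2$ by an ad hoc hand check. You instead split on whether $s\le n+1$ or $s=n+2$: in the former you take one \emph{private} (degree-$1$) point from each line, and only in the boundary case $s=n+2$ do you fall back on a perfect matching. Your decomposition is a bit cleaner---it uses only degree-$1$ points whenever they exist, the parity of $n$ is invoked exactly once (to guarantee the matching when $s=n+2$), and it absorbs $n=2$ without a separate argument. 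The feasibility check that $a_0$ is large enough for the padding is indeed routine from $a_1=s(n+2-s)$ and $a_2=\binom{s}{2}$; just note that you need it in both cases, not only the boundary one (for $s=n+2$ one needs $a_0\ge n/2$, which reduces to $n\ge 2$).
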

\begin{proof}
Assume the opposite that there exists a SPPP such that the order $n$ is even and every point appears in at most 2 lines.

By Lemma \ref{Lem:SumA} and following its notation, we have $a_1+2a_2=(n+1)s$ and $a_1+4a_2=s^2+ns$. It gives $a_1=s(n+2-s)$ and $a_2=\frac{s^2-s}{2}={s\choose2}$.
Since $a_1\geq0$, $s\leq n+2$. Notice that the value of $a_2$ is actually obvious because no three lines intersect at one point so the intersection points of different pairs of lines are different.

When $n=2$, we can first assume that our first two lines are $\{0,1,2\}$ and $\{0,3,4\}$. Forcing point 0 to appear in only 2 lines, we can determine the next two lines to be $\{1,3,5\}$ and $\{2,4,6\}$, which are unique up to permutation. Now, the size of this pure partial plane is already 4, which equals $n+2$. However, it is not saturated since the line $\{0,5,6\}$ is compatible with it. This yields a contradiction.

So we assume that $n\geq4$. Then $$a_1+a_2=-\frac{1}{2}s^2+\frac{2n+3}{2}s=\frac{n^2+3n+2}{2}-(s-n-1)(s-n-2)\leq\frac{n^2+3n+2}{2}\leq n^2$$ as $n\geq4$. So $a_0\geq n+1$, meaning that we have plenty of points to use.

Suppose that points $0,1,\ldots,a_2-1$ appear two times and points $n^2,n^2+1,\ldots,n^2+n$ do not appear. Label the lines as $1,2,\ldots,s$.

If $s$ is even, say point 0 appears in lines $1,2$, point 1 appears in lines $3,4$, $\ldots$, point $\frac{s}{2}-1$ appears in lines $s-1,s$, then we can add a new line $\{0,1,\ldots,\frac{s}{2}-1,n^2,n^2+1,\ldots,n^2+n-\frac{s}{2}\}$. It is easy to see that this new line intersects with previous lines at exactly one point, as it intersects lines $2k-1,2k$ at point $k-1$ only, for $k=1,\ldots,\frac{s}{2}$.

If $s$ is odd, then as $n$ is even, $a_1=s(n+2-s)\neq0$. So we can assume that point $n^2-1$ appears exactly one time and line $s$ contains it. Further, since all pairs of lines intersect at some point, we can assume that point 0 appears in lines $1,2$, point 1 appears in lines $3,4$, $\ldots$, point $\frac{s-3}{2}$ appears in lines $s-2,s-1$. Similarly, a new line, $\{0,1,\ldots,\frac{s-3}{2},n^2-1,n^2,\ldots,n^2+n-\frac{s+1}{2}\}$ can be added.

Therefore, this pure partial plane cannot be saturated.
\end{proof}
\begin{theorem}\label{Thm:odd}
For each odd number $n\geq3$, there exists a saturated pure partial plane of order $n$ such that no points appear in more than 2 lines.
\end{theorem}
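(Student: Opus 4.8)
The plan is to exhibit one explicit construction that works for every odd $n\ge 3$, namely the dual of the complete graph $K_{n+2}$. First I would label $n+2$ abstract ``lines'' by the vertices $1,2,\dots,n+2$ of $K_{n+2}$, introduce one point $p_{ij}$ for each unordered pair $\{i,j\}$, and declare the line indexed by $i$ to be the $(n+1)$-element set $\{p_{ij} : j\ne i\}$. I would then verify the two PPP axioms directly: each line has exactly $n+1$ points by construction, and any two lines $i$ and $i'$ meet in the single point $p_{ii'}$. Since each point $p_{ij}$ lies on exactly the two lines $i$ and $j$, every point has degree exactly $2$, so in particular no point lies on more than two lines. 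I would also record that this uses $\binom{n+2}{2}$ points, which is at most $n^2+n+1$ because that inequality rearranges to $n^2\ge n$; hence the configuration embeds into a universe of $n^2+n+1$ points and is a genuine PPP of order $n$ with the required degree bound.

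The substance of the argument is saturation, which I would establish by contradiction: suppose a further line $L$ (a set of $n+1$ points) could be adjoined while preserving the PPP axioms. Then $L$ must meet each of the $n+2$ existing lines in exactly one point, giving a map that sends each existing line to the unique point of $L$ through which it passes. For a fixed point $q$ of $L$, the fiber over $q$ is the set of existing lines containing $q$; since every point of the configuration has degree $0$ (if unused) or $2$ (if it equals some $p_{ij}$), each nonempty fiber has size exactly $2$. Summing the fiber sizes recovers the total number of existing lines, namely $n+2$, and forces it to be even.

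The key step — and the only place where oddness of $n$ enters — is precisely this parity count: $n+2$ is odd, so its members cannot be partitioned into pairs, no valid line $L$ exists, and the configuration is therefore saturated. I expect the main (though mild) obstacle to be phrasing the fiber/covering argument cleanly, i.e.\ making rigorous that an added line would be forced to pair up the $n+2$ old lines through its degree-two points. Once that reduction is stated correctly the parity contradiction is immediate, and notably no case analysis on $n$ is required beyond observing $n+2$ is odd.
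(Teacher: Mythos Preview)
Your proposal is correct and is essentially the same argument as the paper's: the paper constructs $n+2$ lines in general position in $\mathbb{R}^2$, which is combinatorially identical to your dual-of-$K_{n+2}$ configuration, and then gives the same parity argument (each point of a putative new line covers $0$ or $2$ old lines, but $n+2$ is odd). The only difference is cosmetic---the paper phrases the construction geometrically rather than graph-theoretically.
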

\begin{proof}
The construction is straightforward. Draw $n+2$ lines in $\mathbb{R}^2$ such that no two are parallel and no three are concurrent. This gives us ${n+2\choose 2}<n^2+n+1$ intersection points and $n+2$ lines, each passing through $n+1$ points. Clearly it is a pure partial plane. If it is not saturated, then we should be able to find a subset of these intersection points, as well as some previously unused points, such that each previously existing line passes through exactly one of them. However, each intersection points appear in exactly 2 lines and each previously unused points appear in exactly 0 lines, while there are $n+2$ previously existing lines. Because $n+2$ is odd, such a set cannot be found. Therefore, this construction indeed provides a SPPP as desired.

An example is given in Figure \ref{Fig:sppp3}.
\end{proof}
\begin{figure}[h!]
\centering
\includegraphics[scale=0.4]{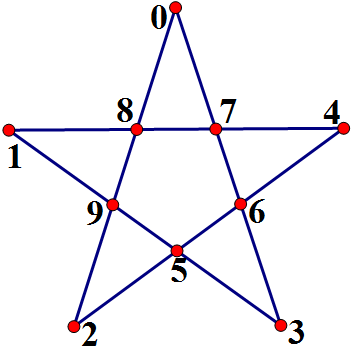}
\caption{a SPPP of order 3 and size 5}
\label{Fig:sppp3}
\end{figure}

Theorem \ref{Thm:even} and Theorem \ref{Thm:odd} are not related to our main theorem. It is still good to have them in the sense that we want to understand the notion of ``saturation" better.

The problem of testing isomorphism between pure partial planes can be reduced to the problem of graph isomorphism.
\begin{definition}\label{Def:PLAG}
For a pure partial plane of order $n$ and size $s$, define its \textit{point-line-adjacency graph} to be an undirected simple bipartite graph with $n^2+n+1+s$ vertices, representing all points and lines where a vertex representing a line is connected to a vertex representing a point if and only if the line contains the point.
\end{definition}
Notice that in the definition, we allow some vertices to have degree 0, although this detail is negligible.
\begin{theorem}\label{Thm:GraphIso}
Two pure partial planes of the same order that are not finite projective planes are isomorphic if and only if their point-line-adjacency graphs are isomorphic.
\end{theorem}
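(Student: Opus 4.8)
The plan is to prove both implications, with the forward direction being routine and essentially all of the content living in the converse. For the forward direction, if the two pure partial planes are isomorphic then by definition there are bijections between their point sets and between their line sets preserving the point-in-line relation; taking the union of these two bijections gives a map between the vertex sets of the two point-line-adjacency graphs that preserves adjacency, hence a graph isomorphism. I would dispose of this in a sentence.

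For the converse, let $\phi$ be an isomorphism between the point-line-adjacency graphs $G_1$ and $G_2$. The goal is to upgrade $\phi$ to a genuine pure-partial-plane isomorphism, and the one thing that can go wrong is that $\phi$ might send a vertex coming from a line to a vertex coming from a point. The plan is to rule this out entirely, after which restricting $\phi$ to the point-vertices and to the line-vertices yields the two required bijections, with incidence preserved because $\phi$ preserves adjacency. First I would record the relevant degree information: every line-vertex has degree exactly $n+1$, while a point-vertex has degree at most $n+1$ (the lines through a fixed point pairwise meet only at that point, so they account for at most $n^2+n$ further distinct points, hence at most $n+1$ lines). In particular every isolated vertex is a point-vertex; since $\phi$ maps isolated vertices to isolated vertices, I would match these among themselves and pass to the induced subgraphs $G_1',G_2'$ on the non-isolated vertices, whose line-sides have sizes $s_1,s_2$ and whose point-sides I will call $A_1,A_2$.

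Two observations then finish the argument. First, $G_i'$ is connected: because every two lines of a pure partial plane meet in a point, all line-vertices lie in a single component, and every non-isolated point-vertex is adjacent to some line-vertex. Hence the bipartition of $G_i'$ into points and lines is its unique $2$-coloring, so $\phi$ either preserves the two sides or swaps them. Second, $|A_i|>s_i$ whenever the plane is not a finite projective plane: counting incidences gives $\sum_{p\in A_i}\deg p=(n+1)s_i$, and since each degree is at most $n+1$ we get $|A_i|\ge s_i$, with equality forcing every non-isolated point to have degree exactly $n+1$; by Lemma \ref{Lem:SumC} applied to any line this would give $(n+1)^2=s_i+n$, i.e.\ $s_i=n^2+n+1$, and a short double count of point-pairs then shows axiom (4) holds, making the structure a finite projective plane, contrary to hypothesis. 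With the strict inequalities $|A_1|>s_1$ and $|A_2|>s_2$ in hand, the swap case is impossible, since a swap would demand $|A_1|=s_2$ and $s_1=|A_2|$, giving the absurd chain $|A_1|>s_1=|A_2|>s_2=|A_1|$. Therefore $\phi$ preserves the two sides, and the proof is complete.

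The step I expect to be the main obstacle is exactly the exclusion of the point-line swap; the degree bookkeeping and the forward direction are mechanical, but one must be careful that a graph isomorphism need not a priori respect which vertices came from points and which from lines. Reducing to the connected non-isolated subgraph (so that the bipartition is canonical) and then separating the two sides by the strict size inequality $|A_i|>s_i$ is the crux, and the role of the hypothesis that the planes are not finite projective planes is precisely to make that inequality strict.
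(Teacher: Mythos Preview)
Your proof is correct and follows essentially the same approach as the paper: both dispose of the forward direction in a sentence, reduce to the connected non-isolated bipartite subgraph, and exclude the point--line swap by showing it would force every appearing point to lie on $n+1$ lines and hence $s=n^2+n+1$, contradicting the non-FPP hypothesis. The only cosmetic difference is that you invoke Lemma~\ref{Lem:SumC} to reach $s=n^2+n+1$ whereas the paper carries out an equivalent dot-product double count, and you package the swap exclusion neatly as the strict size inequality $|A_i|>s_i$.
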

\begin{proof}
One direction is clear. If two pure partial planes are isomorphic, then their point-line-adjacency graphs are isomorphic. The bijection between points and the corresponding bijection between lines give a bijection between vertices in the graphs.

So now let $A$ and $B$ be two pure partial planes of the same order with point-line-adjacency graphs $G_A$ and $G_B$, respectively, and assume that $G_A$ is isomorphic to $G_B$. Clearly, $G_A$ and $G_B$ have the same number of vertices so we know that $A$ and $B$ have the same size $s$. If $s=0$, the theorem is correct. Now we assume $s>0$. Then, $G_A$ and $G_B$ must have the same number of vertices of degree 0, so the number of points appearing in $A$ is the same as the number of points appearing in $B$. For the other vertices of degree at least 1, $G_A$ and $G_B$ are bipartite since there are no edges between points and no edges between lines. By definition, since two lines intersect at exact one point, there must be a length 2 path from any vertex representing a line to another vertex representing a line. So all vertices representing lines are in one connected components, and therefore, this connected component contains all vertices representing points that appear at least once. So, the number of connected components in $G_A$ and $G_B$, ignoring the degree 0 vertices, is 1.

Now, the isomorphism between $G_A$ and $G_B$ will first pair up their degree 0 vertices. And then, it will pair up one part (of the bipartite graph) of $G_A$ to one part of $G_B$. We only need to make sure that vertices representing points in $G_A$ are not paired up with vertices representing lines in $G_B$. For that to happen, all points in $G_A$ that appear will have to appear $n+1$ times since all lines in $G_B$ contain $n+1$ points. The number of points that appear is $(n+1)\cdot s/(n+1)=s$.

Suppose points that appear are labeled as $1,\ldots,s$. For $i=1,\ldots,s$ let $L_i\in\{0,1\}^s$ be such that $L_{i,j}=1$ if line $i$ contains point $j$ and 0 otherwise. Then the dot product $L_i\cdot L_k=1$ if $i\neq k$ and $L_i\cdot L_i=n+1$. Let us compute $x=(L_1+\cdots+L_s)\cdot(L_1+\cdots+L_s)$. First, $L_1+\cdots+L_s=(n+1,n+1,\ldots,n+1)$ so $x=(n+1)^2\cdot s$. At the same time, $x=s\cdot(n+1)+s(s-1)$. Comparing these two, since $s>0$, we easily get $s=n^2+n+1$, which means that $A$ is a finite projective plane, contradicting our assumption.
\end{proof}

\begin{remark}
Theorem \ref{Thm:GraphIso} fails when we consider two finite projective planes that are not self-dual. Examples include Hall planes \cite{hall1943projective}.
\end{remark}

By Theorem \ref{Thm:GraphIso}, we are able to transform the problem of testing isomorphism between saturated pure partial planes into graph isomorphism. Therefore, we can then use the fastest online code for graph isomorphism, \textbf{nauty} and \textbf{Traces} \cite{McKay201494}, to do so.

\section{Programming Model}\label{Sec:Alg}

From now on, we will use computer search to find saturated pure partial planes. In this section, we will present a programming model for searching. Our model is highly adjustable with many conditions under specified. In the next section, we will go into details about specific cases and will specify the conditions that are unclear for now.

We start with a certain pure partial plane which we call a \textit{starting configuration}. By brute force, we then generate a list of lines for us to choose from, which we call a \textit{starting list}, that are compatible with this starting configuration. Using the starting configuration and this list of lines, we do a depth first search, adding line by line to this starting configuration from the list and removing incompatible lines from the list until the list becomes empty. Whenever we get a saturated pure partial plane (the corresponding list of compatible lines is empty), we check if it is isomorphic to any of the saturated pure partial planes we already have by Theorem \ref{Thm:GraphIso} and \textbf{nauty} and \textbf{Traces} \cite{McKay201494}. If not, we record this saturated pure partial plane.

Here is the basic algorithmic model for depth first search (DFS). In the following diagram, there are steps that are not specified since we use different implementations for different purposes, including Step \ref{Stp:End}, Step \ref{Stp:Saturated} and Step \ref{Stp:Check}. We will also explain them below.

\begin{algorithm}
\caption{Depth First Search for Saturated Pure Partial Planes}\label{Alg:dfs}
\begin{algorithmic}[1]
\Procedure{Depth-First-Search}{$ppp_0,rl_0$}\Comment{$ppp_0$ is a pure partial plane, $rl_0$ is a list of lines}
\If{$ppp_0,rl_0$ satisfy certain terminating properties}\label{Stp:End}
    \If{$ppp_0$ satisfies certain properties and is not isomorphic to all PPPs recorded already}\label{Stp:Saturated}
        \State Record $ppp_0$ globally
    \EndIf
\Else
    \For{each line $L$ in $rl_0$}
        \State $ppp_1\leftarrow ppp_0+L$\Comment{a new pure partial plane}
        \State Construct $rl_1$ from $rl_0$ by selecting the lines that intersect with $L$ at exactly one point
        \If{$ppp_1$ passes all the checks}\label{Stp:Check}
            \State \textsc{Depth-First-Search}($ppp_1,rl_1$)
        \EndIf
    \EndFor
\EndIf
\EndProcedure
\end{algorithmic}
\end{algorithm}

This paradigm is very straightforward and simple. However, usually the search space is very large so we need methods to cut down some symmetric cases beforehand.

In Step \ref{Stp:End}, the \textit{certain terminating properties} is usually implemented as checking if $rl_0$ is empty. We will assume so if not specified.

Step \ref{Stp:Check} is the main step in which we cut off symmetric cases. In this step, we will typically check the following properties of $ppp_1$:
\begin{itemize}
\itemsep0em
\item[1.] If point $i$ has appeared in this pure partial plane, then point $i-1$ must also appear in this pure partial plane, for $i=1,2,\ldots,n^2+n$.
\item[2.] The line $L$ just added must be lexicographically greater than the last line in $ppp_0$, assuming all points in each line are sorted.
\item[3.] The number of times that certain points appear should not exceed certain values. These parameters will be specified in Section \ref{Sec:Result} where we are using this algorithm.
\end{itemize}

Check 1 above in Step \ref{Stp:Check} is not always useful. When $n=6$, the cases we are dealing with usually have one point that appears 7 times, meaning that in the starting configuration, all points have already appeared.

Check 2 above in Step \ref{Stp:Check} can also be implemented in the way that in our DFS step after adding the new line to the pure partial plane, we discard all lines from the list of compatible lines that are lexicographically greater than this new line.

Check 3 above in Step \ref{Stp:Check} is the most important one. Typically we will divide the whole case by assuming the number of appearance of certain points. Here, we check that if the number of appearance of such points in $ppp_1$ has already exceeded our assumption.

Requiring some properties of $ppp_0$ in Step \ref{Stp:Saturated} usually helps us reduce the number of isomorphism testing. For example, if in our starting configuration, points $3,4,5,6$ are symmetric, then we can require that in $ppp_0$ the number of times that $3,4,5,6$ appear forms a non-decreasing sequence. In this way, some isomorphic cases will be quickly discarded.
\section{Search Results}\label{Sec:Result}
Our goal is to prove that all possible pure partial planes of order 6 have size at most 25 and to give all pure partial planes of order 6 and size 25. Following our previous notations, let $a_i$ be the number of points that appear exactly $i$ times. We will only consider saturated pure partial planes, in order to use Lemma \ref{Lem:NoN} and get that $a_6=0$, meaning that no points can appear exactly 6 times. Intuitively, if we want our SPPPs to have large sizes, we need to have the points appear as many times as possible. So as an overview, we will search for the cases where $a_7\geq2$ and also the cases where $a_5$ is sufficiently large. In the next section (Section \ref{Sec:Main}), we will give a proof showing that all possible SPPPs with size at least 25 are already covered in our search. And at that point, it will be clear why we discuss these cases.

In this section, we will consider five cases specified in each subsection. For each of them, we will use the algorithm given in Section \ref{Sec:Alg} in multiple phases. In each phase, the inputs are some pure partial planes regarded as starting configurations and the outputs are some bigger pure partial planes, that will be used as starting configurations for the next phase. Intuitively, using multiple phases instead of one will reduce search time since some symmetric cases can be cut off when they have not grown very big. Essentially, searching for pure partial planes in multiple phases is like doing breadth first search. Since we do isomorphism testings after each phase, the idea of combining breadth first search into the depth first search backbone can speed up the search. However, we want the number of phases to be small because breadth first search may consume too much space. For convenience, we will assume that $\{0,1,2,3,4,5,6\}$ is the first line in our starting configuration (except the last case). Also, whenever we talk about a particular ``Step", we are referring to our algorithmic model in Section \ref{Sec:Main}. 

We provide a list of programs for readers to verify (Appendix \ref{Sec:allPPP}).

\subsection{At least 3 points appear 7 times}
First, 0 appears 7 times. Assume that these 7 lines are $\{0,6k+1,6k+2,\ldots,6k+6\}$ where $k=0,1,\ldots,6$. At this stage, all other points are equivalent under symmetric group so we can safely assume that 1 appears 7 times, too. Let the next 6 lines be $\{1,k+7,k+13,k+19,k+25,k+31,k+37\}$ where $k=0,1,\ldots,5$. It is also clear that the choice of these 6 lines are unique.

Now that we have 13 lines in our starting configuration, we need to divide this case. For the third point that appears 7 times, it may be a point that appear in the same line with both 0 and 1, i.e. $2,3,4,5,6$ or other points. We divide this case into two subcases where 2 appears 7 times and where 7 appears 7 times. Notice that in both cases, we can add one more line $\{2,7,14,21,28,35,42\}$ into the collection using symmetry.
\subsubsection{Point 2 appears 7 times}\label{Res:2x7}

\noindent\textbf{Phase 1}

We use our program with stating configuration being this 14-line pure partial plane, the starting list being all lines that start with point 2 and are compatible with the starting configuration. In Step \ref{Stp:End} (described in Section \ref{Sec:Alg}), we simply require that point 2 appears 7 times or equivalently, the size of $ppp_0$ is 19. In Step \ref{Stp:Saturated}, we do nothing and in Step \ref{Stp:Check}, we only do check 2. which checks the lexicographical order.

Running the program gives us a total of 12 nonisomorphic pure partial planes of size 19, where 0,1,2 appear 7 times. These starting configurations are shown in file ``case1-1-phase1.txt".

\

\noindent\textbf{Phase 2}

Then we treat these 12 pure partial planes as starting configurations and run our program again, with the starting list being all lines that are compatible with the starting configuration. The search space is pretty small in this case so we do not actually need a lot of checks. The only check we implemented here is the check of lexicographical order in Step \ref{Stp:Check}. In Step \ref{Stp:End}, we require the list of lines $rl_0$ to be empty.

These 12 starting configurations provide 36 nonisomorphic saturated pure partial planes. The results are shown in file ``case1-1-phase2(SPPP).txt". Among these results, the maximum size is 25 and there are 3 SPPPs that achieve 25.
\subsubsection{Point 7 appears 7 times}
\noindent\textbf{Phase 1}

We use our program with starting configuration begin the 14-line pure partial plane described above, the starting list being all lines with point 7 and compatible with the starting configuration. Similarly, in Step \ref{Stp:End}, we require that point 7 must appear exactly 7 times, or equivalently, the size of $ppp_0$ is 18. And in Step \ref{Stp:Check}, we only do check 2. which checks the lexicographical order.

The program produces 2 nonisomorphic pure partial planes of size 18 where 0,1,7 appear 7 times. These starting configurations are shown in file ``case1-2-phase1.txt".

\

\noindent\textbf{Phase 2}

Then we use these 2 pure partial planes as starting configurations to get saturated pure partial planes using our program. In Step \ref{Stp:Check}, we require that points 2, 3, 4, 5, 6, 8, 9, 10, 11, 12, 13, 19, 25, 31, 37 can appear at most 5 times. Otherwise, if one of them appears at least 6 times, by Lemma \ref{Lem:NoN}, it must appear 7 times in the corresponding saturated pure partial planes and we are then back to the previous case where 0,1,2 appear 7 times.

We find that there are 30 SPPPs while none of these can achieve size 25. The results are shown in ``case1-2-phase2(SPPP).txt".
\subsection{Exactly 2 points appear 7 times}\label{Res:01x7}
In this case, we have only one phase. We assume that 0 and 1 appear 7 times and thus have the unique 13-lines starting configuration: $\{0,6k+1,6k+2,\ldots,6k+6\}$ where $k=0,1,\ldots,6$ and $\{1,k+7,k+13,k+19,k+25,k+31,k+37\}$ where $k=0,1,\ldots,5$. Actually, it can be easily seen that if we can add one more line $\{2,7,14,21,28,35,42\}$ while still keeping the uniqueness.

We use the program for this 14-line starting configuration and with the starting list being the list of all possible lines that are compatible with the starting configuration. In Step \ref{Stp:Saturated}, we require that $c_3\geq c_4\geq c_5\geq c_6$, where $c_i$ is the number of times that $i$ appears. This requirement is valid because in our starting configuration, points 3,4,5,6 are equivalent under the symmetric group. In Step \ref{Stp:Check}, we check the lexicographical order as usual and we also require that all points except 0,1 must appear at most 5 times, by Lemma \ref{Lem:NoN}.

Eventually, we get 2166 SPPPs while the maximum size is 23. The results are shown in ``case2-phase1(SPPP).txt".
\subsection{Exactly one point, 0, appears 7 times; 1,2,3 appear 5 times and 4 appears at least 4 times}\label{Res:0x7,123x5}
The reason that we do not do the case where exactly one point appears 7 times is largeness of our search space. Therefore, we restrict our attention to the case that one point appears 7 times while a lot of points appear 5 times. As before, $\{0,1,2,3,4,5,6\}$ is a line in our starting configuration. We have only two phases while the second phase has little work to do.

\

\noindent\textbf{Phase 1}

Let us determine the starting configuration. The first 7 lines are $\{0,6k+1,6k+2,\ldots,6k+6\}$ where $k=0,1,\ldots,6$ and the next 4 lines are $\{1,k+7,k+13,k+19,k+25,k+31,k+37\}$ where $k=0,1,2,3$. The next line, containing one of $2,3$ can also be uniquely added, which we assume to be $\{2,7,14,21,28,35,41\}$.

We use our program for this 12-line starting configuration with the starting list being the list of all possible lines that start with one of $2,3,4$ and are compatible with the starting configuration. In Step \ref{Stp:End}, we no longer require $rl_0$ to be empty; instead, we check that if $c_2=c_3=5$ and $c_4=4$, where $c_i$ is the number of times that $i$ appears in $ppp_0$. Equivalently, this is to say that the size of $ppp_0$ is 22 by Lemma \ref{Lem:SumC} used on the first line. In Step \ref{Stp:Check}, we check the lexicographical order as usual and also make sure that no point except 1 can appear more than 5 times.
The result from the program is 26 pure partial planes of size 22, presented in ``case4-phase1.txt".

\

\noindent\textbf{Phase 2}

The second phase is simply extending these pure partial planes to saturation. It turns out that some of them are already saturated and the others can be made saturated by appending one line. We get a 23 pure partial planes of size 22 or 23 in total, presented in ``case4-phase2(SPPP).txt".
\subsection{Points 0,1,2,3,4 appear 5 times each}\label{Res:01234x5}
Recall that we require $\{0,1,2,3,4,5,6\}$ to be our first line. Importantly, in this case, we do not actually require that no points appear 7 times, but rather, we require that point 0,1,2,3,4 appear exactly 5 times each. We will see from the results that actually no points can appear more than 5 times in all the saturated pure partial planes we get in the end.

\

\noindent\textbf{Phase 1}

As before, we can determine the first 9 lines uniquely. They are

\begin{tabular}{|l|ccccccc|}
\hline
line 1 & 0&1&2&3&4&5&6\\
\hline
line 2 & 0&7&8&9&10&11&12\\
\hline
line 3 & 0&13&14&15&16&17&18\\
\hline
line 4 & 0&19&20&21&22&23&24\\
\hline
line 5 & 0&25&26&27&28&29&30\\
\hline
line 6 & 1&7&13&19&25&31&32\\
\hline
line 7 & 1&8&14&20&26&33&34\\
\hline
line 8 & 1&9&15&21&27&35&36\\
\hline
line 9 & 1&10&16&22&28&37&38\\
\hline
\end{tabular}

\

In this phase, we add two lines that start at point 2 to this 9-line starting configuration. Namely, we use our program with the starting list being all possible lines that include point 2 and are compatible with the 9-line starting configuration shown above. And in Step \ref{Stp:End}, we require the size of $ppp_0$ to be 11.
In this way, we get 29 pure partial planes with size 11, used as starting configurations for our next phase. These starting configurations are presented in file ``case4-phase1.txt".

\

\noindent\textbf{Phase 2}

For each of the starting configuration with size 11 we just obtained, we use the program with the starting list being all possible lines that start at point 2,3,4 and are compatible with the starting configuration with 11 lines. In Step \ref{Stp:Check}, we make sure that 2,3,4 appear at most 5 times. And in Step \ref{Stp:End}, we require point 2,3,4 to appear exactly five times.
In this phase, we get 30 pure partial planes with size 21. They are shown in file ``case4-phase2.txt".

\

\noindent\textbf{Phase 3}

For each of the 21-line starting configurations, we run our program with the starting list being all possible lines that are compatible with the starting configuration. In Step \ref{Stp:Check}, we make sure that no point can appear more than 5 times and in Step \ref{Stp:End}, we require that the list $rl_0$ is empty, meaning that we require the pure partial plane to be saturated. Interestingly, 18 of these 21-line starting configurations are already saturated and the rest of them cannot be made saturated without letting one of point 0,1,2,3,4 appear 7 times. All possible saturated pure partial planes in this case are shown in file ``case4-phase3(SPPP).txt".

\subsection{Points 0,...,14 appear exactly 5 times and points 15,...,39 appear exactly 4 times}\label{Res:40}
In this case, we focus our attention to the situations where there are 15 points appearing 5 times, 25 points appearing 4 times and 3 points not appearing at all. Also, we require that in each line, three points are from $0,\ldots,14$ and four points are from $15,\ldots,39$. The use of this case will become clear in Section \ref{Sec:Main} where we give the main theorem. By a simple counting formula, we know that if such pure partial plane exists, it must have size 25.

For this case, we will need a different isomorphism testing function in order to differentiate between a point that appears 5 times and a point that appears 4 time. To do this, we simply add another vertex to our point-line-adjacency graphs (Definition \ref{Def:PLAG}), connect vertices $0,1,\ldots,14$ to it and use graph isomorphism testing for the new graphs. Notice that this different isomorphism testing function is used solely for this case.

The first 5 lines starting at 0 can be uniquely determined.

\begin{tabular}{|l|ccccccc|}
\hline
line 1 & 0&1&2&15&16&17&18\\
\hline
line 2 & 0&3&4&19&20&21&22\\
\hline
line 3 & 0&5&6&23&24&25&26\\
\hline
line 4 & 0&7&8&27&28&29&30\\
\hline
line 5 & 0&9&10&31&32&33&34\\
\hline
\end{tabular}

\

The next line starting at 1 must pair up with two points from $\{3,4,\ldots,14\}$. There are two possibilities: 1,3,5 or 1,3,11. Specifically, the lines are $\{1,3,5,27,31,35,36\}$ and $\{1,3,11,23,27,31,35\}$.

\

\noindent\textbf{Phase 1}

With these two possible starting configurations, we add 4 lines to them that start with 1. We run our program (separately for these two starting configurations) with all lines that start at 1, contain three points from $\{1,\ldots,14\}$ and four points from $\{15,\ldots,39\}$ and are compatible with the starting configuration. In Step \ref{Stp:End}, we check that $ppp_0$ has size 9. Here, we get a total of 13 pure partial planes of size 9, presented in ``case5-phase1.txt".

\

\noindent\textbf{Phase 2}

This phase exists because we want to save some running time. We add just 1 compatible line that starts at 2 to the starting configurations. Then we get a total of 620 pure partial planes of size 10, shown in ``case5-phase2.txt".

\

\noindent\textbf{Phase 3}

We run our program with all lines that are compatible with the starting configuration, contain three points from $\{2,\ldots,14\}$ and four points from $\{15,\ldots,39\}$. In Step \ref{Stp:Check}, we make sure that points $2,\ldots,14$ never appear more than 5 times and points $15,\ldots,39$ never appear more than 4 times. In Step \ref{Stp:End}, we check that if our pure partial plane has size 25. Finally, we get a single pure partial plane of size 25, shown in ``case5-phase3(SPPP).txt". In fact, it must be saturated and we will explain this in next Section.

\subsection{Summary}\label{Sec:Summary}
For all these 5 cases described above, we find no pure partial planes of size 26 or greater. We find a total of 4 pure partial planes of size 25: three from Section \ref{Res:2x7} and one from Section \ref{Res:40}. We will list all of them in Appendix \ref{Sec:allPPP} for clarity.

\section{Main Theorem}\label{Sec:Main}
In this section, we restate our main theorem and finish the rest of the proof.
\begin{theorem*}[\ref{thm:main}]
The maximum size of a pure partial plane of order 6 is 25. Furthermore, all pure partial planes of size 25 are listed in Appendix \ref{Sec:allPPP}.
\end{theorem*}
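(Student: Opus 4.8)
The plan is to reduce the entire statement to a single \emph{coverage claim}: every saturated pure partial plane (SPPP) of order $6$ with size $s\ge 25$ is isomorphic to one of the configurations enumerated by the five searches of Section \ref{Sec:Result}. This reduction suffices for both halves of Theorem \ref{thm:main}. Indeed, any pure partial plane extends to a saturated one of at least the same size, so the maximum size over all pure partial planes equals the maximum over SPPPs; moreover, once $25$ is known to be maximal, any size-$25$ example admits no additional compatible line and is therefore automatically saturated. Granting the coverage claim, the recorded search results (nothing of size $\ge 26$, and exactly four objects of size $25$) deliver the theorem at once.

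To prove coverage I would first pin down the degree sequence. Let $a_k$ be the number of points lying on exactly $k$ lines. Since a point lies on at most $n+1=7$ lines and Lemma \ref{Lem:NoN} gives $a_6=0$, only $a_0,\dots,a_5,a_7$ occur. Lemma \ref{Lem:SumA} supplies $\sum_k k a_k = 7s$ and $\sum_k k^2 a_k = s^2+6s$; feeding these into Cauchy--Schwarz against $\sum_{k\ge 1}a_k\le 43$ forces $a_0\le 3$ for $s\ge 25$, so at least $40$ of the $43$ points appear. Eliminating the second moment via $\sum_k k(5-k)a_k = 5\sum_k k a_k - \sum_k k^2 a_k$ yields the key identity $4a_1+6a_2+6a_3+4a_4 = s(29-s)+14\,a_7$, whose left side is nonnegative; when $a_7=0$ this already bounds $s\le 28$ (the value $s=29$ being excluded by a divisibility check on $\sum_k k a_k$), and for $s=25$ it forces $a_1+a_2+a_3+a_4\le 25$, hence $a_5\ge 15$. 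Thus in the hardest regime there are always many points of multiplicity $5$.

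The argument then splits on $a_7$, the number of points meeting $7$ lines. If $a_7\ge 3$ or $a_7=2$, the lines through the shared heavy points are determined up to relabeling, matching the fixed starting configurations used in the corresponding searches (Sections \ref{Res:2x7}\,ff.\ and \ref{Res:01x7}). For $a_7\le 1$ I would run a dichotomy driven by Lemma \ref{Lem:SumC}: on any line $\{i_1,\dots,i_7\}$ the multiplicities satisfy $c_{i_1}+\cdots+c_{i_7}=s+6$, so for $s=25$ with all multiplicities in $\{4,5\}$ a line carries exactly three points of multiplicity $5$ and four of multiplicity $4$. If instead some line carries five points of multiplicity $5$, relabeling lands us in the case of Section \ref{Res:01234x5} (five collinear $5$-points, which does not require $a_7=0$); if $a_7=1$ and a line through the heavy point carries three $5$-points together with a further point of multiplicity $\ge 4$, we land in the case with one $7$-point where $1,2,3$ appear $5$ times and $4$ appears at least $4$ times; and when no line is this heavy, the identity above rigidifies the sequence to $a_4=25,\ a_5=15,\ a_0=3$, whereupon Lemma \ref{Lem:SumC} forces the $3$--$4$ split on every line, which is exactly the structure searched in Section \ref{Res:40}.

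The main obstacle is establishing that this case analysis is genuinely exhaustive and that every reduction is legitimately ``without loss of generality''. Concretely, I expect the delicate points to be: verifying that for $a_7\le 1$ each SPPP falls into exactly one of the heavy-line, three-$5$-points-through-the-heavy-point, or rigid $(a_4,a_5,a_0)=(25,15,3)$ alternatives, including confirming that the residual sizes $s\in\{26,27,28\}$ with $a_7=0$ and all sizes $s\ge 26$ with $a_7\le 1$ cannot occur; and checking that the symmetry group of each starting configuration really does permit fixing its initial lines as the searches assume, so that no isomorphism class is missed. Once these reductions are in place, invoking the recorded search outputs completes the proof.
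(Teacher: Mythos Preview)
Your outline matches the paper's architecture: reduce to saturated objects, split on $a_7$, and use Lemmas \ref{Lem:SumA} and \ref{Lem:SumC} together with the search results to cover every possibility. The identity you derive, $4a_1+6a_2+6a_3+4a_4 = s(29-s)+14a_7$, is exactly (twice) the paper's equation \eqref{Eqn:11}, and your handling of $a_7\ge 2$ and of the case ``some line carries five $5$-points'' agrees with the paper's reductions to Sections \ref{Res:2x7}--\ref{Res:01x7} and \ref{Res:01234x5}.

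The genuine gap is your trichotomy for $a_7\le 1$. Your ``otherwise'' branch asserts that the degree sequence rigidifies to $(a_4,a_5,a_0)=(25,15,3)$, but this is incompatible with $a_7=1$: the seven lines through the heavy point already cover all $43$ points, so $a_0=0$. Hence the rigid branch cannot absorb the $a_7=1$ subcases that escape your Case~B. The paper resolves $a_7=1$ differently, and the missing idea is precisely this extra equation $a_0=0$. With it one argues: if $a_1\ne 0$, pick a $1$-point on the line through the heavy point and apply Lemma \ref{Lem:SumC} to that line to land in Section \ref{Res:0x7,123x5}; if $a_1=0$ but $a_2\ne 0$, take a line through a $2$-point that does \emph{not} contain the heavy point (one of its two lines must avoid it) and Lemma \ref{Lem:SumC} forces five $5$-points on that line, landing in Section \ref{Res:01234x5}; and if $a_1=a_2=0$, the three equations \eqref{Eqn:2}--\eqref{Eqn:4} combine to $a_4=-(s-25)^2-12<0$, an outright contradiction. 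Your Case~B never isolates a $1$-point or $2$-point, so it cannot by itself trigger the needed reduction.

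A second, smaller gap: for $a_7=0$, $s=25$, your identity alone does not force $a_3=0$. After ruling out five collinear $5$-points, the only line types are $(5,5,5,5,4,4,3)$ and $(5,5,5,4,4,4,4)$, whence $a_1=a_2=0$; the equations then give $a_3\equiv 0\pmod{10}$, but one still needs the counting bound $3a_3\le s=25$ (each $3$-point lies on three lines of the first type, and there are at most $25$ such lines) to conclude $a_3=0$ and reach the $(15,25,3)$ profile of Section \ref{Res:40}. This is exactly what the paper does; without it the rigidification step is incomplete.
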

\begin{proof}
Essentially, we want to show that there are no pure partial planes of size 25 or greater outside our search. To do this, we restrict our attention to saturated pure partial planes instead of pure partial planes in general because we want to use Lemma \ref{Lem:NoN}.

Assume that there exists a saturated pure partial plane $A$ of size $s$ with $s\geq25$. Specifically, assume that $A$ is a saturated pure partial plane that is not mentioned in our search in Section \ref{Sec:Result}. Define $a_i$ to be the number of points that appear $i$ times in $A$. Since we have already searched all possible cases for $a_7\geq2$ (Section \ref{Sec:Result}), now we assume that $a_7\leq1$. Use $c_i$ to denote the number of times that point $i$ appears in $A$. In other words, $c_i$ is the number of lines in $A$ that contain point $i$.

Lemma \ref{Lem:SumA} gives us the following useful equations, with $n=6$:
\begin{align*}
7a_7+5a_5+4a_4+3a_3+2a_2+a_1=&7s,\\
49a_7+25a_5+16a_4+9a_3+4a_2+a_1=&s^2+6s.
\end{align*}
Notice that according to Lemma \ref{Lem:NoN}, $a_6=0$ so we ignore this term.

In the above two equations, we subtract the second one by 5 times the first one, in order to get rid of $a_5$, which is potentially the largest term. We then divide this equation by 2. Together with the first equation, we have
\begin{equation}\label{Eqn:1}
7a_7+5a_5+4a_4+3a_3+2a_2+a_1=7s,
\end{equation}
\begin{equation}\label{Eqn:11}
2a_4+3a_3+3a_2+2a_1=\displaystyle{\frac{29s-s^2}{2}}+7a_7.
\end{equation}

\noindent\textbf{Case 1:} $a_7=0$ and $s\geq 26$.

For any line $\{i_1,\ldots,i_7\}$ of $A$, according to Lemma \ref{Lem:SumC}, we have $c_{i_1}+\cdots+c_{i_7}=s+6\geq32$. $a_7=0$ means that $c_{i_k}\leq5$ for $k=1,\ldots,7$. Also, according to the search result from Section \ref{Res:01234x5}, we have already covered the cases where there are at least 5 points that appear 5 times in a line. So we know that at most 4 of $c_{i_1},\ldots,c_{i_7}$ can be 5. Then $s+6\leq 5+5+5+5+4+4+4$ so $s\leq 26$. There is only one possibility now: $s=26$, 4 of $c_{i_1},\ldots,c_{i_7}$ equal 5 and the other 3 equal 4. In other words, $a_k>0$ only when $k=4,5$.

Equation \eqref{Eqn:1} and \eqref{Eqn:11} become $5a_5+4a_4=182$, $2a_4=39$. It clearly does not have integer solutions.

\

\noindent\textbf{Case 2:} $a_7=0$ and $s=25$.

For any line $\{i_1,\ldots,i_7\}$ of $A$, according to Lemma \ref{Lem:SumC}, we have $c_{i_1}+\cdots+c_{i_7}=s+6=31$.
Similarly as above, since we have already searched for cases where at least 5 points in this line appear a total of 5 times, there are these two possibilities left for $c_{i_1},\ldots,c_{i_7}$: $5,5,5,5,4,4,3$ and $5,5,5,4,4,4,4$. Thus, we must have that $a_1=a_2=0$.

Equation \eqref{Eqn:1} and \eqref{Eqn:11} become $5a_5+4a_4+3a_3=175$, $2a_4+3a_3=50$. The second one gives that $a_3$ is a multiple of $2$. We subtract the first equation by two times the second equation and get $5a_5-3a_3=75$ and it gives that $a_3$ is a multiple of 5. Thus, $a_3$ is a multiple of 10. Since a point that appears 3 times must be contained in 3 lines of the form 5,5,5,5,4,4,3, we then know that $3a_3\leq 25$. These arguments give us $a_3=0$. By solving the equations, we get $a_5=15$ and $a_4=25$. Further, every line has the form $5,5,5,4,4,4,4.$ This is exactly the case we considered in Section \ref{Res:40}.

\

\noindent\textbf{Case 3:} $a_7=1$.

We can assume that 0 appears 7 times and the lines that contain 0 are $\{0,6k+1,6k+2,\ldots,6k+6\}$ where $k=0,1,\ldots,6$. Therefore, we can see that all points have appeared at least once so $a_0=0$. We thus have an additional equation $a_7+a_5+a_4+\cdots+a_1=n^2+n+1=43$.

If $a_1\neq0$, without loss of generality, we assume that point 6 appears $1$ time, meaning $c_6=1$. According to Lemma \ref{Lem:SumC}, $c_0+c_1+\cdots+c_6=s+6\geq31$. So $c_1+c_2+c_3+c_4+c_5\geq23$ with $c_i\leq 5$ for $i=1,\ldots,5$. Therefore, either at least four of $c_1,c_2,c_3,c_4,c_5$ have value 5 or at least three of them have value 5 and a fourth one have value at least 4. These situations is covered in our computer search in Section \ref{Res:0x7,123x5}. So we then assume that $a_1=0$.

If $a_2\neq0$, assume that point 6 appears in two lines. At least one of these two lines won't contain point 0 since they intersect at point 6 already. Suppose that this line is $6,j_1,j_2,\ldots,j_6$. Then by Lemma \ref{Lem:SumC}, $c_6+c_{j_1}+\cdots+c_{j_6}\geq31$. Since $c_6=2$ and $c_{j_i}\leq5$ for $i=1,\ldots,6$, at least five of $c_{j_1},\ldots,c_{j_6}$ must be 5. This situation is covered in our computer search in Section \ref{Res:01234x5}. So we then assume that $a_2=0$.

Simplify Equations \eqref{Eqn:1}, \eqref{Eqn:11} and together with the new equation, we now have
\begin{equation}\label{Eqn:2}
5a_5+4a_4+3a_3=7s-7,
\end{equation}
\begin{equation}\label{Eqn:3}
2a_4+3a_3=\frac{29s-s^2}{2}+7,
\end{equation}
\begin{equation}\label{Eqn:4}
a_5+a_4+a_3=42.
\end{equation}

Manipulating the equations by $3\cdot\text{Equation }\eqref{Eqn:2}+2\cdot\text{Equation }\eqref{Eqn:3}-15\cdot\text{Equation }\eqref{Eqn:4}$, we get
$$a_4=50s-s^2-637=-12-(s-25)^2<0,$$
a clear contradiction.

Therefore, there are no saturated pure partial planes of order 6 and size at least 25 that are outside of our search.
\end{proof}

\section{Appendix}\label{Sec:Appendix}
\subsection{All Pure Partial Planes of Order 6 and Size 25}\label{Sec:allPPP}
Here is a list of all (saturated) pure partial planes of order 6 and size 25, up to isomorphism.

\

\noindent\{\{0, 1, 2, 3, 4, 5, 6\}, \{0, 7, 8, 9, 10, 11, 12\}, \{0, 13, 14, 15, 16, 17, 18\}, \{0, 19, 20, 21, 22, 23, 24\}, \{0, 25, 26, 27, 28, 29, 30\}, \{0, 31, 32, 33, 34, 35, 36\}, \{0, 37, 38, 39, 40, 41, 42\}, \{1, 7, 13, 19, 25, 31, 37\}, \{1, 8, 14, 20, 26, 32, 38\}, \{1, 9, 15, 21, 27, 33, 39\}, \{1, 10, 16, 22, 28, 34, 40\}, \{1, 11, 17, 23, 29, 35, 41\}, \{1, 12, 18, 24, 30, 36, 42\}, \{2, 7, 14, 21, 28, 35, 42\}, \{2, 8, 13, 22, 27, 36, 41\}, \{2, 9, 16, 23, 30, 31, 38\}, \{2, 10, 15, 24, 29, 32, 37\}, \{2, 11, 18, 19, 26, 34, 39\}, \{2, 12, 17, 20, 25, 33, 40\}, \{3, 7, 15, 20, 30, 34, 41\}, \{3, 9, 14, 19, 29, 36, 40\}, \{3, 10, 13, 23, 26, 33, 42\}, \{4, 7, 18, 22, 29, 33, 38\}, \{4, 11, 13, 21, 30, 32, 40\}, \{4, 12, 14, 23, 27, 34, 37\}\}.

\

\noindent\{\{0, 1, 2, 3, 4, 5, 6\}, \{0, 7, 8, 9, 10, 11, 12\}, \{0, 13, 14, 15, 16, 17, 18\}, \{0, 19, 20, 21, 22, 23, 24\}, \{0, 25, 26, 27, 28, 29, 30\}, \{0, 31, 32, 33, 34, 35, 36\}, \{0, 37, 38, 39, 40, 41, 42\}, \{1, 7, 13, 19, 25, 31, 37\}, \{1, 8, 14, 20, 26, 32, 38\}, \{1, 9, 15, 21, 27, 33, 39\}, \{1, 10, 16, 22, 28, 34, 40\}, \{1, 11, 17, 23, 29, 35, 41\}, \{1, 12, 18, 24, 30, 36, 42\}, \{2, 7, 14, 21, 28, 35, 42\}, \{2, 8, 13, 22, 27, 36, 41\}, \{2, 9, 16, 23, 30, 31, 38\}, \{2, 10, 15, 24, 29, 32, 37\}, \{2, 11, 18, 19, 26, 34, 39\}, \{2, 12, 17, 20, 25, 33, 40\}, \{3, 7, 15, 20, 30, 34, 41\}, \{3, 9, 14, 19, 29, 36, 40\}, \{3, 10, 13, 23, 26, 33, 42\}, \{4, 7, 18, 23, 27, 32, 40\}, \{4, 11, 14, 22, 30, 33, 37\}, \{4, 12, 13, 21, 29, 34, 38\}\}.

\

\noindent\{\{0, 1, 2, 3, 4, 5, 6\}, \{0, 7, 8, 9, 10, 11, 12\}, \{0, 13, 14, 15, 16, 17, 18\}, \{0, 19, 20, 21, 22, 23, 24\}, \{0, 25, 26, 27, 28, 29, 30\}, \{0, 31, 32, 33, 34, 35, 36\}, \{0, 37, 38, 39, 40, 41, 42\}, \{1, 7, 13, 19, 25, 31, 37\}, \{1, 8, 14, 20, 26, 32, 38\}, \{1, 9, 15, 21, 27, 33, 39\}, \{1, 10, 16, 22, 28, 34, 40\}, \{1, 11, 17, 23, 29, 35, 41\}, \{1, 12, 18, 24, 30, 36, 42\}, \{2, 7, 14, 21, 28, 35, 42\}, \{2, 8, 13, 22, 27, 36, 41\}, \{2, 9, 17, 19, 30, 32, 40\}, \{2, 10, 18, 23, 25, 33, 38\}, \{2, 11, 16, 24, 26, 31, 39\}, \{2, 12, 15, 20, 29, 34, 37\}, \{3, 7, 15, 23, 26, 36, 40\}, \{3, 8, 17, 24, 28, 33, 37\}, \{3, 11, 13, 21, 30, 34, 38\}, \{4, 7, 16, 20, 30, 33, 41\}, \{4, 8, 18, 21, 29, 31, 40\}, \{4, 12, 13, 23, 28, 32, 39\}\}.

\

\noindent\{\{0, 1, 2, 15, 16, 17, 18\}, \{0, 3, 4, 19, 20, 21, 22\}, \{0, 5, 6, 23, 24, 25, 26\}, \{0, 7, 8, 27, 28, 29, 30\}, \{0, 9, 10, 31, 32, 33, 34\}, \{1, 3, 5, 27, 31, 35, 36\}, \{1, 4, 6, 28, 32, 37, 38\}, \{1, 11, 12, 19, 23, 29, 33\}, \{1, 13, 14, 20, 24, 30, 34\}, \{2, 7, 9, 19, 24, 35, 37\}, \{2, 8, 10, 20, 23, 36, 38\}, \{2, 11, 13, 21, 25, 27, 32\}, \{2, 12, 14, 22, 26, 28, 31\}, \{3, 7, 11, 15, 26, 34, 38\}, \{3, 8, 14, 16, 25, 33, 37\}, \{3, 9, 13, 17, 23, 28, 39\}, \{4, 8, 11, 18, 24, 31, 39\}, \{4, 9, 12, 15, 25, 30, 36\}, \{4, 10, 13, 16, 26, 29, 35\}, \{5, 7, 12, 16, 20, 32, 39\}, \{5, 9, 14, 18, 21, 29, 38\}, \{5, 10, 11, 17, 22, 30, 37\}, \{6, 7, 13, 18, 22, 33, 36\}, \{6, 8, 12, 17, 21, 34, 35\}, \{6, 10, 14, 15, 19, 27, 39\}\}.
\subsection{List of Files}\label{Sec:files}
Here is a list of all the files that we provide for the project, under the folder ``cases". The case number of each file corresponds directly to the subsection number under Section \ref{Sec:Result} so we won't give redundant reference in the table. For each program, the ``input file" name is already written in the code. Each program will directly print the result that is supposed to be the same as what is written in the ``output file". The run time approximation is rough and serves as an upper bound. We also provide a file \verb|testcases.sh| to automatically test that the output files we provided are correct. The run time of \verb|testcases.sh| is supposed to be the sum of run times listed below. Readers should refer to \verb|README.txt| for more details.

\

\begin{tabular}{|c|c|c|c|}
\hline
file name & input file & output file & run time \\
\hline
\verb|case1-1-phase1.cpp| & \verb|case1-phase0.txt| & \verb|case1-1-phase1.txt| & 1 min \\
\hline
\verb|case1-1-phase2.cpp| & \verb|case1-1-phase1.txt| & \verb|case1-1-phase2_SPPP.txt| & 1 min \\
\hline
\verb|case1-2-phase1.cpp| & \verb|case1-phase0.txt| & \verb|case1-2-phase1.txt| & 1 min \\
\hline
\verb|case1-2-phase2.cpp| & \verb|case1-2-phase1.txt| & \verb|case1-2-phase2_SPPP.txt| & 1 min \\
\hline
\verb|case2-phase1.cpp| & \verb|case2-phase0.txt| & \verb|case2-phase1_SPPP.txt| & 10 days \\
\hline
\verb|case3-phase1.cpp| & \verb|case3-phase0.txt| & \verb|case3-phase1.txt| & 10 hours \\
\hline
\verb|case3-phase2.cpp| & \verb|case3-phase1.txt| & \verb|case3-phase2_SPPP.txt| & 1 min \\
\hline
\verb|case4-phase1.cpp| & \verb|case4-phase0.txt| & \verb|case4-phase1.txt| & 1 hour \\
\hline
\verb|case4-phase2.cpp| & \verb|case4-phase1.txt| & \verb|case4-phase2.txt| & 2 days \\
\hline
\verb|case4-phase3.cpp| & \verb|case4-phase2.txt| & \verb|case4-phase3_SPPP.txt| & 2 min \\
\hline
\verb|case5-phase1.cpp| & \verb|case5-phase0.txt| & \verb|case5-phase1.txt| & 1 day \\
\hline
\verb|case5-phase2.cpp| & \verb|case5-phase1.txt| & \verb|case5-phase2.txt| & 1 min \\
\hline
\verb|case5-phase3.cpp| & \verb|case5-phase2.txt| & \verb|case5-phase3_SPPP.txt| & 2 days \\
\hline
\end{tabular}

\subsection*{Acknowledgements}
Thanks to Henry Cohn for supervising this project.
\bibliographystyle{plain}
\bibliography{ref}
\end{document}